\documentclass[11pt,a4paper]{article}
\pdfoutput=1 

\usepackage{epsf,epsfig,amsfonts,amsgen,amsmath,amstext,amsbsy,amsopn,amsthm,cases,listings,color
}
\usepackage{ebezier,eepic}
\usepackage{color}
\usepackage{multirow}
\usepackage{epstopdf}
\usepackage{graphicx}
\usepackage{pgf,tikz}
\usepackage{mathrsfs}
\usepackage[marginal]{footmisc}
\usepackage{enumitem}
\usepackage[titletoc]{appendix}
\usepackage{booktabs}
\usepackage{url}
\usepackage{relsize}
\usepackage{mathtools}
\usepackage{comment}
\usepackage{pgfplots}
\usepackage{amssymb}
\usepackage{float}
\usepackage{wasysym}

\usepackage{empheq}

\usepackage{dsfont}

\usepackage{tikz}
\usepackage{longtable}
\usepackage{subfigure}
\pgfplotsset{compat=1.17}
\usepackage{mathrsfs}
\usepackage{wasysym}
\usetikzlibrary{arrows}
\usepackage{aligned-overset}
\usepackage{bm}
\usepackage{bbm}
\usepackage[T1]{fontenc}
\usepackage[nobysame,initials,bibtex-style]{amsrefs}

\allowdisplaybreaks[1]

\setlength{\textwidth}{150mm} \setlength{\oddsidemargin}{7mm}
\setlength{\evensidemargin}{7mm} \setlength{\topmargin}{-5mm}
\setlength{\textheight}{245mm} \topmargin -18mm

\newtheorem{definition}{Definition} [section]
\newtheorem{theorem}[definition]{Theorem}
\newtheorem{lemma}[definition]{Lemma}
\newtheorem{proposition}[definition]{Proposition}

\newtheorem{conjecture}[definition]{Conjecture}
\newtheorem{claim}[definition]{Claim}

\theoremstyle{remark}

\newenvironment{poc}{\begin{proof}[Proof of the claim]}{\end{proof}}

\newcommand{\C}[1]{\mathcal{#1}}

\def\ex{\mathrm{ex}}
\def\coex{\mathrm{co\mbox{-}ex}}

\newcommand{\hide}[1]{}



\setlength{\parskip}{2mm}
\setlength{\parindent}{3mm}
\begin{document}
\title{\bf\Large Rational codegree Tur\'an density of hypergraphs}
\date{\today}
\author{Jun Gao\thanks{Mathematics Institute and DIMAP, University of Warwick, Coventry, UK. Research supported by ERC Advanced Grant 101020255. Email: \texttt{\{jun.gao,o.pikhurko,shumin.sun\}@warwick.ac.uk}}
\and Oleg Pikhurko\footnotemark[1] 
\and Mingyuan Rong\thanks{School of Mathematical Sciences, USTC, Hefei, China. Supported by National Key R\&D Program of China 2023YFA1010201, NSFC Grant No. 12125106 and the USTC Excellent PhD Students Overseas Study Program. Email:\texttt{rong\_ming\_yuan@mail.ustc.edu.cn}}
\and Shumin Sun\footnotemark[1]}

\maketitle
\begin{abstract}
Let $H$ be a \emph{$k$-graph} (i.e.\ a $k$-uniform hypergraph). Its \emph{minimum
codegree} $\delta_{k-1}(H)$ is the largest integer $t$ such that every
$(k-1)$-subset of $V(H)$ is contained in at least $t$ edges of~$H$.
The \emph{codegree
Turán density} $\gamma(\mathcal{F})$ of a family $\mathcal{F}$ of $k$-graphs is the infimum of $\gamma > 0$ such
that every $k$-graph $H$ on $n\to\infty$ vertices with $\delta_{k-1}(H) \ge (\gamma+o(1))\, n$
contains some member of $\mathcal{F}$ as a subgraph.

We prove that, for every integer $k\ge3$ and every rational number $\alpha \in [0,1)$,
there exists a finite family of $k$-graphs $\mathcal{F}$ such that $\gamma(\mathcal{F})=\alpha$. 

Also, for every $k \ge 3$, we establish a strong version of non-principality, namely that there are two $k$-graphs $F_1$ and $F_2$ such that the codegree Tur\'an density of $\{F_1,F_2\}$ is strictly smaller than that of each $F_i$.
This answers a question of Mubayi and Zhao [\emph{J Comb Theory (A)} 114 (2007) 1118--1132].

\end{abstract}

\section{Introduction}
For an integer $k\geq 2$, a \textit{$k$-uniform hypergraph} (in short, \textit{$k$-graph}) $H$ consists of a vertex set $V(H)$ and an edge set $E(H)\subseteq \binom{V(H)}{k}$, that is, every edge is a $k$-element subset of $V(H)$.  Given a (possibly infinite) family $\C F$ of $k$-graphs, the \emph{Tur\'an number} of $\C F$, denoted by $\ex(n,\C F)$, is defined as the maximum number of edges in an $n$-vertex $k$-graph containing no element of $\C F$ as a subgraph. The Tur\'an density $\pi(\C F)$ is defined as
\[
\lim_{n\to \infty}\frac{\ex(n,\C F)}{\binom{n}{k}},
\]
a limit that exists by a standard averaging argument. Tur\'an problems for hypergraphs are notoriously difficult, and our knowledge is quite limited even for seemingly simple cases, for example when $\C F$ forbids the complete $3$-graph on $4$ vertices. For more background, we refer the reader to the surveys~\cite{kee11,sido95}.

In this paper, we investigate the variant called \emph{codegree Tur\'an density}, introduced by Mubayi and Zhao~\cite{MZ07}. 
Given a $k$-graph $H$ and a $(k-1)$-vertex subset $S\subseteq V(H)$, let $d_H(S)$ (or $d(S)$) denote the number of edges of $H$ containing $S$ as a subset. Let $N_H(S)$ (or $N(S)$) denote the \emph{neighbourhood} of $S$, i.e., $N_H(S)\coloneqq\{ v \in V(H)\ : S\cup \{v\}\in E(H)\}$.
The \textit{minimum codegree} $\delta_{k-1}(H)$ of $H$ is defined as the minimum of $d(S)$ over all $(k-1)$-vertex subsets $S$ of $V(H)$. Given a family $\C F$ of $k$-graphs, the \emph{codegree Tur\'an number} $\coex(n,\C F)$ is the maximum value of $\delta_{k-1}(H)$ that an $n$-vertex $\C F$-free $k$-graph $H$ can admit, and the \emph{codegree Tur\'an density} of $\C F$ is defined as
\[
\gamma(\C F)\coloneqq\lim_{n\to \infty}\frac{\coex(n,\C F)}{n},
\]
where the limit is known to exist for any $k$-graph family $\C F$, see~\cite[Proposition 1.2]{MZ07}. 
When $\C F=\{F\}$ is a single $k$-graph, we often omit curly brackets and write $\ex(n,F)$, $\pi(F)$, $\coex(n,F)$ and $\gamma(F)$ instead. 



For each $k\ge 2$, let
\[
\Pi_k\coloneqq \left\{\pi(\C F) \colon \text{$\C F$ is a family of $k$-graphs}\right\}\subseteq [0,1).
\]
The celebrated Erd\H{o}s--Simonovits--Stone Theorem~\cite{ES66,ES46} implies that $\pi(\C F)=\min_{F\in \C F}1-\frac{1}{\chi(F)-1}$ for any graph family $\C F$, where $\chi(F)$ denotes the chromatic number of $F$. This means $\Pi_2=\{0,\frac{1}{2},\frac{2}{3},\dots,
\frac{r-1}{r},\dots\}$. The well-ordered property of $\Pi_2$ motivates the following definition: we call a real number $a\in [0,1)$ a \emph{jump} for $k$ if there exists $\delta>0$ such that no family $\C F$ of $k$-graphs satisfies $\pi (\C F)\in (a,a+\delta)$. Then every real number in $[0,1)$ is a jump for $k=2$. For general uniformity $k$, the study of $\Pi_k$ has a long and rich history, tracing back to the famous Erd\H{o}s jumping conjecture~(see e.g.~\cite{FR84,FPRT07,BT11,P14,CS25}).

Mubayi and Zhao~\cite{MZ07} initiated the study of the analogous problem for codegree Tur\'an density. Similarly, for $k\ge 2$, let
\[
\Gamma_k \coloneqq \left\{\gamma(\C F) \colon \text{$\C F$ is a family of $k$-graphs}\right\}\subseteq [0,1).
\]
For the case $k=2$, it follows from the degree version of Erd\H{o}s--Simonovits--Stone Theorem that $\Gamma_2 = \{0,\frac{1}{2},\frac{2}{3},\dots,\frac{r-1}{r},\dots \}$. In their paper, Mubayi and Zhao~\cite[Theorem~1.6]{MZ07} proved that for the case $k \ge 3$, 
$\Gamma_k$ contains no jump; that is, $\Gamma_k$ is dense in $[0,1)$. They further conjectured that $\Gamma_k=[0,1)$, i.e. for every $0\le \alpha<1$, there exists a family $\C F$ of $k$-graphs such that $\gamma(\C F)=\alpha$. 
Note that in their conjecture, the family is allowed to be infinite. 
In fact, if we restrict attention to finite families only, the conjecture 
becomes trivially false, since the collection of finite families of $k$-graphs  (up to isomorphisms)
is countable, whereas the set of real numbers in the interval $[0,1)$ is uncountable.
Our first result verifies their conjecture for each rational number $\alpha\in [0,1)$ by providing a finite family of $k$-graphs; its proof is inspired by the proof of Theorem~1.6 in~\cite{MZ07}.

\begin{theorem}\label{thm:rational}
    Fix $k\ge 3$. For every rational number $\alpha\in [0,1)$, there exists a finite family $\C F$ of $k$-graphs such that $\gamma(\C F)=\alpha$.
\end{theorem}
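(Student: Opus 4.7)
My approach combines a construction achieving the lower bound $\gamma(\mathcal F)\ge\alpha$ with a carefully chosen finite family providing the matching upper bound. Write $\alpha=p/q$ with integers $0\le p<q$.

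For the construction, fix any $S\subseteq\mathbb Z_q$ with $|S|=p$. Partition $[n]=V_0\sqcup\dots\sqcup V_{q-1}$ into parts of sizes $\lfloor n/q\rfloor$ or $\lceil n/q\rceil$, assign each $v\in V_i$ the label $\ell(v)=i$, and let $H^*_n$ be the $k$-graph on $[n]$ whose edges are the $k$-subsets $e$ with $\sum_{v\in e}\ell(v)\in S\pmod q$. For any $(k-1)$-set $T$ of label-sum $t\pmod q$, the codegree equals $\sum_{s\in S}|V_{(s-t)\bmod q}|-O(1)=pn/q-O(1)$, so $\delta_{k-1}(H^*_n)\ge\alpha n-O(1)$, which delivers the lower bound for any family $\mathcal F$ that $H^*_n$ avoids for all $n$.

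For the finite family $\mathcal F$, a natural first attempt is the collection of all $k$-graphs on at most some threshold $r=r(k,p,q)$ vertices admitting no vertex-labeling $\ell\colon V(F)\to\mathbb Z_q$ with every edge of $F$ having label-sum in $S$. The fact that $H^*_n$ is $\mathcal F$-free is then automatic, since the global labeling of $H^*_n$ restricts to every substructure. For the matching upper bound, in the spirit of the counting arguments behind~\cite[Theorem~1.6]{MZ07}, one would assume an $\mathcal F$-free host $H$ on $n$ vertices with $\delta_{k-1}(H)\ge(\alpha+\epsilon)n$, iteratively extract a consistent $\mathbb Z_q$-labeling of a $(1-o(1))$-fraction of $V(H)$ using the absence of small obstructions on every $r$-subset, and then bound the codegree above via this labeling to contradict the hypothesis for large $n$.

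The main obstacle is calibrating $\mathcal F$ so that it is simultaneously absent from $H^*_n$ and rich enough to force the near-global labeling of any $\mathcal F$-free host. Subtleties arise in boundary regimes -- for instance when $q=2$, $k$ is odd, and $S=\{1\}$, the constant labeling $\ell\equiv 1$ trivially satisfies every constraint, so the naive obstruction family is empty and must be enlarged with auxiliary ``rigidity witnesses'' (small graphs whose only valid labelings force a prescribed codegree pattern on a distinguished $(k-1)$-set). Packaging the local-to-global labeling step cleanly, choosing the threshold $r$ correctly, and handling these degenerate parameter choices (including the base case $p=0$, where simply forbidding two edges sharing a $(k-1)$-set suffices) is where I expect the bulk of the technical work to lie, and where the counting method of Mubayi--Zhao must be adapted most carefully.
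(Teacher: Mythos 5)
Your lower-bound construction is sound (and similar in spirit to partite constructions elsewhere in the literature), but it differs from the paper's: the paper uses a cyclic construction $G_{a,b,n}$ on $a$ parts, where for example a $(k-1)$-set inside a single part $V_i$ sees exactly the $b$ parts $V_{i+1},\dots,V_{i+b}$, supplemented by edges meeting three parts or meeting two parts in $\ge 2$ vertices each. Your $\mathbb Z_q$-label-sum construction is cleaner and also gives $\delta_{k-1}\approx pn/q$, and it is genuinely a correct lower-bound construction for some suitable family. The real issue is the matching upper bound, which is where the two approaches diverge completely and where your sketch has a genuine gap.

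The gap: your family $\mathcal F$ consists of all ``unlabelable'' $k$-graphs on at most $r$ vertices, and your plan is to deduce from $\mathcal F$-freeness a near-global consistent $\mathbb Z_q$-labeling of $V(H)$. But $\mathcal F$-freeness only gives you that \emph{every $r$-subset} admits \emph{some} labeling; it does not make those local labelings mutually consistent, and there is no reason a finite family of local obstructions should force global labelability. (Even for bipartiteness of $2$-graphs, excluding odd cycles up to length $r$ does not make a graph bipartite.) Worse, in the case you flag — $q=2$, $k$ odd, $S=\{1\}$ — the constant labeling $\ell\equiv 1$ satisfies every constraint, so $\mathcal F=\varnothing$ and $\gamma(\varnothing)=1\neq \tfrac12$; this is not a ``boundary subtlety'' that can be patched with unspecified ``rigidity witnesses'' but a sign that the obstruction family is fundamentally the wrong object. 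The paper sidesteps local-to-global gluing entirely: it takes $\mathcal H$ to be \emph{all} $k$-graphs on a fixed $m$ vertices with codegree $\ge(\alpha-\delta)m$, forms the $(a,b+1)$-extension family $\mathcal H(a,b+1)$ by attaching new vertices linked to $b+1$ of any $a$ chosen $(k-1)$-sets, and then (i) proves $G_{a,b,n}$ avoids $\mathcal H(a,b+1)$ via a pigeonhole/B\'ezout argument on how a copy of $F\in\mathcal H$ must straddle all $a$ parts, and (ii) uses the Mubayi--Zhao averaging lemma to find some $F\in\mathcal H$ inside any high-codegree host, after which the high codegree directly supplies the $r$ extension vertices by a double count. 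No global labeling is ever constructed. To complete your proof you would need to replace the vague local-to-global step with a concrete mechanism — and it is precisely that mechanism (the $(s,t)$-extension together with Lemma~2.1 of~\cite{MZ07}) that constitutes the paper's key idea and is missing from your proposal.
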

It is natural to conjecture that every rational value in $[0,1)$ can be realized 
as the codegree Tur\'an density of a single $k$-graph.

\begin{conjecture}
Fix $k \ge 3$. For every rational number $\alpha \in [0,1)$, there exists a $k$-graph $F$ such that $\gamma(F)=\alpha$.
\end{conjecture}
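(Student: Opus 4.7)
The plan is to convert Theorem~\ref{thm:rational} into a single-graph statement. For each rational $\alpha\in[0,1)$, let $\C F_\alpha$ denote the finite family produced by Theorem~\ref{thm:rational}, and let $(H_n)$ be a sequence of extremal $\C F_\alpha$-free $k$-graphs with $\delta_{k-1}(H_n)=(\alpha+o(1))n$. The goal is to construct a single $k$-graph $F$ that plays the role of the whole family: namely, $F$ should be avoided by each $H_n$ (for the lower bound $\gamma(F)\ge\alpha$), yet its forbiddance should be strong enough to force codegree at most $(\alpha+o(1))n$ on any $F$-free host (for the upper bound $\gamma(F)\le\alpha$).

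The first step is to unpack the construction underlying Theorem~\ref{thm:rational}. The graphs in $\C F_\alpha$ typically split into two roles: one subfamily encodes the extremal blow-up/amalgamation structure that pins down $\gamma(\C F_\alpha)\ge\alpha$, while the other rules out denser configurations and thus supplies the matching upper bound. I would try to assemble these pieces into a single $k$-graph $F$ by gluing the members of $\C F_\alpha$ along carefully chosen common $(k-1)$-shadows, so that every copy of $F$ in a host hypergraph simultaneously exhibits a copy of each $F_i\in\C F_\alpha$.

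For the lower bound $\gamma(F)\ge\alpha$, I would verify that the extremal $\C F_\alpha$-free construction, or a suitable blow-up of it, remains $F$-free. This should be a structural check once the amalgamation is set up so that its common shadows are compatible with the extremal construction. For the upper bound $\gamma(F)\le\alpha$, the key step is a supersaturation argument: assuming $H$ is $F$-free on $n$ vertices with $\delta_{k-1}(H)\ge(\alpha+\varepsilon)n$, one would apply Theorem~\ref{thm:rational} plus a standard codegree-supersaturation argument to produce many copies of each $F_i$ sharing specified vertex sets, and then assemble these into a copy of $F$ by a random/greedy selection on the gluing vertices.

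The main obstacle is precisely the non-principality phenomenon highlighted in the abstract: since $\gamma(\{F_1,F_2\})$ can be strictly smaller than $\min(\gamma(F_1),\gamma(F_2))$, forbidding a family is genuinely stronger than forbidding any single member, so naive amalgamations (such as disjoint unions, which decouple the forbidden configurations) will not work. The gluing must be tight enough that $F$-freeness locally forces $\C F_\alpha$-freeness, yet loose enough that the extremal construction for $\C F_\alpha$ stays $F$-free; balancing these constraints simultaneously for every rational $\alpha=p/q$ is the hard part. For concrete rationals such as $\alpha=1/2$ in the $3$-graph case one might hope for an explicit candidate (for example, an expansion of a small forbidden configuration), but a uniform construction across all rationals will likely require a new ingredient beyond the techniques used in Theorem~\ref{thm:rational}.
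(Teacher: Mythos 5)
This statement is an open conjecture in the paper, not a theorem: the authors prove only the weaker Theorem~\ref{thm:rational} (existence of a \emph{finite family} with the prescribed codegree density) and explicitly leave the single-graph version as a conjecture. So there is no proof of the paper's to compare against, and the question is whether your sketch could plausibly close the gap. It cannot, because of a monotonicity obstruction that your own discussion of non-principality skirts but does not resolve.

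The fatal flaw is this. You propose to build $F$ by gluing the members of $\C F_\alpha$ so that ``every copy of $F$ in a host hypergraph simultaneously exhibits a copy of each $F_i\in\C F_\alpha$,'' i.e.\ $F_i\subseteq F$ for every $i$. But $\gamma$ is monotone under subgraph containment: if $F'\subseteq F$, then any $F'$-free $k$-graph is automatically $F$-free, so $\coex(n,F')\le \coex(n,F)$ and hence $\gamma(F')\le\gamma(F)$. Applying this to each $F_i\subseteq F$ gives $\gamma(F)\ge\max_i\gamma(F_i)\ge\min_i\gamma(F_i)\ge\gamma(\C F_\alpha)=\alpha$, and the first inequality is strict whenever the family is non-principal. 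This is not a ``balancing'' issue that a cleverer amalgamation could fix; any $F$ containing all the $F_i$'s as subgraphs inherits the largest individual density, so the lower bound $\gamma(F)\ge\alpha$ will overshoot. In fact, for the explicit family $\C H(a,b+1)$ produced in the paper's proof of Theorem~\ref{thm:rational}, the individual members are expansions that typically have codegree density much larger than $b/a$, so the overshoot is substantial. Your supersaturation plan for the upper bound $\gamma(F)\le\alpha$ is reasonable in spirit, but it cannot be matched by a lower bound once $F$ is a common supergraph of the family. Your final sentence is on the mark: a genuinely new ingredient is needed, and this is precisely why the authors state the single-graph version as a conjecture rather than a theorem.
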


At present, only a few sporadic examples of $3$-graphs with non-zero codegree Tur\'an density are known. 
In particular,
$\gamma(K_4^{(3)-})=\frac{1}{4}$~\cite{FPVV23}, where $K_4^{(3)-}$ denotes the complete $3$-graph on four vertices 
with one edge removed;
$\gamma(F_{3,2})=\frac{1}{2}$~\cite{Mubayi05}, where $F_{3,2}$ is the Fano plane; 
and $\gamma(C_3^{\ell})=\frac{1}{3}$ for all $\ell \ge 10$ with $3 \nmid \ell$~\cite{Ma2025,PSNS26}, 
where $C_3^{\ell}$ is the tight $3$-uniform cycle of length~$\ell$. Also, Keevash and Zhao~\cite{KZ07} constructed, for every pair of integers $k\ge 3$ and $r \ge 2$, 
a $k$-graph whose codegree Tur\'an density is equal to $\frac{r-1}{r}$.

We call a family $\C F$ of $k$-graphs \emph{non-principal} if its Tur\'an density is strictly less than the density of each member. When $k=2$, Erd\H{o}s--Simonovits--Stone Theorem implies that no such family exists. Motivated by exploring the difference between graphs and hypergraphs, Mubayi and R\"odl~\cite{MR02} conjectured that non-principal families exist for $k\ge 3$. Balogh~\cite{B02} confirmed this conjecture by constructing a finite non-principal family of $3$-graphs. Later, Mubayi and Pikhurko~\cite{MP08} extended this result by constructing a non-principal $k$-graph family of size two for every $k\ge 3$. Mubayi and Zhao~\cite{MZ07} proved a similar result for $\gamma$, showing that there exists a finite family $\C F$ of $k$-graphs such that $0<\gamma(\C F )<\min_{F\in \C F} \gamma(F)$.  Parallel to the situation for $\pi$, they~\cite[Page 1131]{MZ07} posed the problem of constructing two $k$-graphs $F_1$, $F_2$ such
 that $0<\gamma(\{F_1,F_2\})<\min\{\gamma(F_1),\gamma(F_2)\}$. Sudakov (see~\cite[Page 1131]{MZ07}) observed that, for even $k\ge 4$, such a construction can be derived using the framework of~\cite{MP08}. Our next result settles this question for all $k\ge 3$.

 \begin{theorem}\label{thm:non-principal}
  For every $k\ge 3$, there exist $k$-graphs $F_1$ and $F_2$ such that
     \[0<\gamma(\{F_1, F_2\})<\min\{\gamma(F_1),\gamma(F_2)\}.
     \]
 \end{theorem}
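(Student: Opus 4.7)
The plan is to split on the parity of $k$. For even $k\ge 4$, Sudakov's observation cited in~\cite[Page~1131]{MZ07} already yields the required pair by taking the Mubayi--Pikhurko non-principal pair $(F_1,F_2)$ for the Tur\'an density and noting that its extremal construction has minimum codegree matching its density. Thus the only genuinely new content is the odd case, and the central one is $k=3$; for odd $k\ge 5$, one should be able to lift the $k=3$ construction via an iterated link/blow-up argument, or repeat the proof with a direct analog of the $k=3$ construction.

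For $k=3$, I would build the pair $F_1, F_2$ starting from two structurally different extremal-type $3$-graphs $H_1, H_2$ on $[n]$, both with $\delta_2(H_i)\ge \alpha n - o(n)$ for a common rational $\alpha\in(0,1)$. Natural candidates come from the Keevash--Zhao family realising $\gamma=\frac{r-1}{r}$: $H_1, H_2$ can be taken as two Cayley-type $3$-graphs on a suitable abelian group arising from different generating sets, engineered to disagree on a positive fraction of triples. Choose $F_1$ to be a minimal $3$-graph that sits inside $H_2$ but not inside $H_1$, and $F_2$ symmetrically. With a codegree-stability theorem for each $F_i$, analogous in spirit to the tool used in the proof of Theorem~\ref{thm:rational}, one then verifies $\gamma(F_i)=\alpha$ for both.

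The heart of the argument is showing $\gamma(\{F_1,F_2\})<\alpha$ while remaining positive. Assuming for contradiction that $H$ is an $n$-vertex $\{F_1,F_2\}$-free $3$-graph with $\delta_2(H)\ge(\alpha-\varepsilon)n$, the stability result for $F_1$ forces $H$ to be within $o(n^3)$ triple-edits of a copy of $H_1$, and the one for $F_2$ forces closeness to $H_2$. Since $H_1$ and $H_2$ have symmetric difference $\Omega(n^3)$ by construction, a triangle-inequality / counting argument gives a contradiction for sufficiently small $\varepsilon>0$. The positivity $\gamma(\{F_1,F_2\})>0$ is witnessed by any explicit $\{F_1,F_2\}$-free $3$-graph with constant minimum codegree; such an example can be extracted from the finite families used in the proof of Theorem~\ref{thm:rational} by taking a rational target strictly below $\alpha$ whose corresponding family contains (blow-ups of) both $F_1$ and $F_2$.

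The main obstacle is two-fold: first, selecting $H_1, H_2$ so that each is extremal for its $F_i$ while their extremal families are robustly incompatible; second, establishing simultaneous codegree-stability for both $F_1$ and $F_2$. Unlike the even-$k$ setting, no parity-based algebraic symmetry is available, so one must engineer the two extremal families to differ in a way that survives near-extremal codegree fluctuations. Supersaturation combined with a hypergraph-removal / link-graph analysis, applied to carefully chosen extensions of $F_1, F_2$, is the likely technical toolkit; making these interact cleanly with the codegree constraint (rather than with edge density) is the delicate and novel part of the proof.
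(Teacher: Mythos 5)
Your proposal diverges from the paper's proof in almost every respect, and, as written, it has several gaps that would prevent it from being completed.

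First, the paper makes no parity split. The construction works uniformly for all $k\ge 3$: it takes $F_1=K^k(\ell,\ell)$ (with $\gamma(F_1)\ge 1/2$ by Mubayi--Zhao's Proposition~4.2) and $F_2=F_2^k$, the explicit ``fan'' $k$-graph built in Section~3 with $\gamma(F_2)=1/2$. You treat the even case as done via Sudakov's observation and then propose an entirely separate strategy for odd $k$; this is unnecessary complication.

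Second, and more importantly, your strategy for $k=3$ leans on tools that are not available and on constructions you do not produce. You posit (a) two concrete near-extremal $3$-graphs $H_1,H_2$ with a common codegree density $\alpha$ that disagree on $\Omega(n^3)$ triples, (b) forbidden graphs $F_1,F_2$ each embedding in exactly one of $H_1,H_2$, (c) a verification that $\gamma(F_i)=\alpha$, and, crucially, (d) a \emph{codegree stability theorem} for each $F_i$, asserting that any near-extremal $\{F_i\}$-free $3$-graph is $o(n^3)$-close to $H_i$. None of (a)--(d) is spelled out, and (d) in particular is a major open-ended task: codegree stability results are known only for a handful of specific $k$-graphs and are typically the hardest ingredient in any such argument. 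You cannot simply ``assume'' them. The paper avoids stability altogether. Its key observation is structural: if $H$ has large codegree and is $F_2^k$-free, then the greedy construction of $F_2^k$ must fail at some step, which produces a $(k-1)$-set $X$ whose neighbourhood $N(X)$ contains a large \emph{independent} set $A$. Repeating this inside $A$ gives a second disjoint independent set $B$, so $H[A\cup B]$ is $2$-colourable and still has codegree density $>\rho$; Mubayi--Zhao's Proposition~4.3 then yields a copy of $K^k(\ell,\ell)=F_1$. No stability, no removal lemma, no supersaturation is used, and the two forbidden graphs play completely asymmetric roles (one creates $2$-colourable structure, the other exploits it), unlike the symmetric ``incompatible stability'' scheme you describe.

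Third, your positivity argument is unconvincing as stated: the finite families from Theorem~1.1 are tailored to specific rationals and there is no reason their extremal constructions should be $\{F_1,F_2\}$-free. Positivity for the paper's pair follows from standard constructions for $K^k(\ell,\ell)$ and $F_2^k$ separately (both have codegree density $\ge 1/2$, so e.g.\ the $K^k(\ell,\ell)$-free construction underlying Proposition~4.2 has positive minimum codegree; one also checks $F_2^k$-freeness there), but your proposal, tied to an unspecified construction, leaves this open. In short: the high-level goal is the same, but the approach is fundamentally different, and the missing ingredients (explicit $F_1,F_2$, codegree stability, the positivity witness) are exactly the hard part, not routine details.
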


\medskip \noindent\textbf{Organisation.} The remainder of this paper is organized as follows.
Theorem~\ref{thm:rational} is proved in Section~\ref{sec:rational}. 
Theorem~\ref{thm:F_r} of Section~\ref{sec:single k-graph} reproves the result of Keevash and Zhao~\cite[Theorem~1.3]{KZ07} that, for every $r$, there is a $k$-graph $F_r^k$ 
whose codegree Tur\'an density is equal to $\frac{r-1}{r}$. While our $k$-graph $F_r^k$ is more complicated than the one in~\cite{KZ07}, its definition is adapted so that $F_2^k$ works with our proof of Theorem~\ref{thm:non-principal} on the existence of a non-principal 2-element $k$-graph family.
Theorem~\ref{thm:non-principal} itself is proved in Section~\ref{sec:non-principal}.

Throughout this paper, $k\ge 3$ is fixed to denote the arity of the considered hypergraphs.

 \section{Proof of Theorem~\ref{thm:rational}}\label{sec:rational}
 
To prove Theorem~\ref{thm:rational}, we need to construct a finite family of $k$-graphs $\mathcal{F}_{\alpha}$ for each rational number $\alpha \in [0,1)$ such that $\gamma(\mathcal{F}_{\alpha}) =\alpha$. The following definition will be used to construct our family of  $k$-graphs.

\begin{definition}[$(s,t)$-extension]
Let $s$ and $t$ be integers with $s \ge t \ge 1$.
For a $k$-graph $F$, let $F(s,t)$ be the family of $k$-graphs constructed as follows. Let $\mathcal{A}(F)= \binom{V(F)}{k-1}$ denote the family of all $(k-1)$-subsets of $V(F)$, and let $\mathcal{T}(F) = \binom{\mathcal{A}(F)}{s}$ be the collection of all $s$-subsets of $\mathcal{A}(F)$. Enumerate $\mathcal{T}(F) = \{T_1,T_2,\dots,T_r\}$ with $r = \binom{|\mathcal{A}(F)|}{s} = \binom{\binom{|V(F)|}{k-1}}{s}$.
Furthermore, define \[
\mathcal{P}(F)\coloneqq\{(P_1,P_2,\dots, P_r): P_i \subseteq T_i \text{ and } |P_i|=t \text{ for each } i\in [r]\}.
\] 
Thus $|\mathcal{P}(F)|=\binom{s}{t}^r$. For each $(P_1,P_2,\dots, P_r) \in \mathcal{P}(F)$, let $F(P_1,P_2,\dots, P_r)$ be the $k$-graph obtained from $F$ by adding $r$ distinct new vertices $v_1, v_2, \dots, v_r$ and, for each $i\in[r]$, adding all edges of the form $\{v_i\} \cup Y$ where $Y\in P_i$. We call the $k$-graph family
\[
F(s,t)\coloneqq \big\{F(P_1,P_2,\dots, P_r):(P_1,P_2,\dots, P_r) \in \mathcal{P}(F) \big\}
\]
 the \emph{$(s,t)$-extension} of $F$. Furthermore, for a family of $k$-graphs, let 
\[\mathcal{F}(s,t) \coloneqq \bigcup_{F\in \mathcal{F}} F(s,t)\] denote the \emph{$(s,t)$-extension} of $\mathcal{F}$. When no confusion arises, we write $\mathcal{A},\mathcal{T}$ and $\mathcal{P}$ for $\mathcal{A}(F),\mathcal{T}(F) $ and $\mathcal{P}(F)$, respectively.

\end{definition}

Roughly speaking, each $k$-graph in $F(s,t)$ is obtained from $F$ by adding vertices corresponding to collections of $s$ distinct $(k-1)$-sets of vertices, where each added vertex forms exactly $t$ edges with $t$ of those $(k-1)$-sets. The family $F(s,t)$  is the collection of all $k$-graphs that can be constructed in this manner.

Here is an example for the 3-graph with a single edge 
$F=\{a,b,c\}$ and $s=2$, $t=1$. 
Then we have \[\mathcal{A} = \{\{a,b\},\{b,c\},\{a,c\}\}\text{ and } \mathcal{T} =\{\{\{a,b\},\{b,c\}\},\{\{a,b\},\{a,c\}\},\{\{b,c\},\{a,c\}\}\}.
\] Then the number of $(P_1,P_2,P_3) \in \mathcal{P}$ is $\binom{s}{t}^r = 2^3$. For each $(P_1,P_2,P_3) \in \mathcal{P}$, we construct a $3$-graph $F(P_1,P_2,P_3)$. For example, for $(P_1,P_2,P_3) = (\{\{a,b\}\},\{\{a,b\}\},\{\{b,c\}\}) \in \mathcal{P}$, $F(P_1,P_2,P_3)$ is the $3$-graph on vertex set \[\{a,b,c,v_1,v_2,v_3\} \text{ and edge set } \{\{a,b,c\},\{a,b,v_1\},\{a,b,v_2\},\{b,c,v_3\}\}.\]
The family $F(s,t)$ is the collection of all $3$-graphs $F(P_1,P_2,P_3)$ with all $(P_1,P_2,P_3)\in \mathcal{P}$. 
We have $|F(s,t)| \le 2^3$, as $|\mathcal{P}| = 2^3$ and some of the $3$-graphs obtained from the above process are isomorphic. In this case, $F(s,t)$ consists of all $3$-graphs obtained by adding 3 new vertices to a single edge $\{a,b,c\}$ with the link of each added vertex being an arbitrary single pair in $\{a,b,c\}$ except we do not allow the same pair to be used 3 times.  

It is clear that $\mathcal{F}(s,t)$ is a finite family whenever $\mathcal{F}$ is finite.

For $ \delta > 0$, let  $M(\delta)$ be the smallest integer such that every $m \ge M(\delta)$ satisfies
\[
m \ge \frac{2(k-1)}{\delta}
\qquad\text{and}\qquad
\binom{m}{k-1} e^{-\delta^{2}(m-k+1)/12} \le \frac{1}{2}.
\]
To construct our family, we also require the following lemma of Mubayi and Zhao in \cite{MZ07}.
\begin{lemma}[\cite{MZ07}*{Lemma~2.1}]
\label{lem:sub-large-co}
Take any $\delta>0$ and $\alpha > 0$ with $\alpha + \delta < 1$.
If $n \ge m \ge M(\delta)$ and $G$ is a $k$-graph on $[n]$ with $\delta_{k-1}(G) \ge (\alpha + \delta)n$, then the number of $m$-sets $S$ satisfying $\delta_{k-1}(G[S]) > \alpha m$ is at least $\frac{1}{2} \binom{n}{m}$. In particular, $G$ contains a subgraph $H$ on $m$ vertices with $\delta_{k-1} (H) > \alpha m$.
\end{lemma}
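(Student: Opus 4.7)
The plan is a direct first moment / concentration argument. I would take a uniformly random $m$-subset $S$ of $[n]$ and show that with probability at least $1/2$, every $(k-1)$-subset $T \subseteq S$ has $|N_G(T) \cap S| > \alpha m$; then the number of ``good'' $m$-sets is at least $\tfrac{1}{2}\binom{n}{m}$, which gives both conclusions.

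Fix any $(k-1)$-subset $T \subseteq [n]$ and condition on the event $T \subseteq S$. Under this conditioning, $S \setminus T$ is a uniformly random $(m-k+1)$-subset of $[n] \setminus T$, so $X \coloneqq |N_G(T) \cap S|$ is a hypergeometric random variable. Using $|N_G(T)| \ge (\alpha+\delta)n$, its conditional mean satisfies
\[
\mathbb{E}[X \mid T\subseteq S] \;=\; (m-k+1)\cdot \frac{|N_G(T)|}{n-k+1} \;\ge\; (\alpha+\delta)(m-k+1).
\]
Expanding and using the hypothesis $m \ge 2(k-1)/\delta$ from the definition of $M(\delta)$, a short calculation gives
\[
\mathbb{E}[X \mid T\subseteq S] \;\ge\; \alpha m + \tfrac{\delta}{2}(m-k+1).
\]
So deviating below $\alpha m$ requires a deviation of at least $\tfrac{\delta}{2}(m-k+1)$ from the mean. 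By a Chernoff / Hoeffding tail bound for the hypergeometric distribution (which dominates or matches the binomial bound, cf.\ Hoeffding's inequality applied to sampling without replacement), this probability is at most $\exp(-\delta^2(m-k+1)/12)$; the constant $12$ is exactly what the second inequality in the definition of $M(\delta)$ is designed to absorb.

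Now I would remove the conditioning by a union bound over the $\binom{m}{k-1}$ $(k-1)$-subsets $T$ of $S$:
\[
\Pr\!\left[\delta_{k-1}(G[S]) \le \alpha m\right]
\;\le\; \binom{m}{k-1}\,\exp(-\delta^2(m-k+1)/12) \;\le\; \tfrac{1}{2},
\]
where the final inequality uses $m \ge M(\delta)$. Hence the number of $m$-sets $S$ with $\delta_{k-1}(G[S]) > \alpha m$ is at least $\tfrac{1}{2}\binom{n}{m}$, proving the lemma.

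The only substantive obstacle is the concentration step, since the random variable $X$ arises from sampling without replacement; I would either invoke Hoeffding's reduction (sampling without replacement is at least as concentrated as with replacement) and then apply a standard multiplicative Chernoff bound, or cite Chvátal's hypergeometric tail bound. Verifying that the chosen bound yields exactly the exponent $\delta^2(m-k+1)/12$ appearing in the definition of $M(\delta)$ is a routine bookkeeping matter.
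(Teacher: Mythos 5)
Your proposal is correct and is the standard argument: pick a uniformly random $m$-set $S$, apply a hypergeometric (sampling-without-replacement) tail bound to $|N_G(T)\cap S|$ for each $(k-1)$-set $T\subseteq S$, and union bound over the $\binom{m}{k-1}$ choices of $T$ — with the two conditions in the definition of $M(\delta)$ designed precisely to absorb the mean shift and the final factor of $1/2$. Note that the paper does not reprove this lemma — it cites it directly as Lemma~2.1 of Mubayi--Zhao — so there is no in-paper proof to compare against, but your reconstruction matches the argument of the cited source; the one step left implicit is that summing $\Pr[T\subseteq S]$ over all $(k-1)$-sets $T\subseteq [n]$ gives exactly $\binom{m}{k-1}$ (linearity of expectation), which justifies the clean union-bound displayed inequality.
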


Now we are ready to prove Theorem~\ref{thm:rational} that every rational number is the codegree density of a finite forbidden family.

\begin{proof}[Proof of Theorem~\ref{thm:rational}.] Take any rational number $\alpha \in [0,1)$. If $\alpha=0$ then we can forbid a single edge, so assume that $\alpha>0$.
Write $\alpha = b/a$ for positive integers $a$ and $b$ with $\gcd(a,b) = 1$. 
Define \[\delta\coloneqq \frac{1}{4a^2} \text{ and } m \coloneqq \max \{4a^2(k-1), M(\delta)\}.\]

Let $\mathcal{H}$ be the collection of all
$k$-graphs $H$ on $m$ vertices with $\delta_{k-1}(H) \ge (\alpha-\delta)m$. We claim that the $(a,b+1)$-extension of $\mathcal{H}$, namely $\mathcal{H}(a,b+1)$, has the codegree Tur\'an density $\alpha$, i.e. $\gamma\left(\mathcal{H}(a,b+1)\right) ={b}/{a} = \alpha$. 

\begin{claim}\label{cl: upper}
    $\gamma(\mathcal{H}(a,b+1))  \ge \frac{b}{a}$.
\end{claim}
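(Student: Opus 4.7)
The plan is to establish the lower bound by exhibiting, for every sufficiently large $n$, an $n$-vertex $k$-graph $G$ with $\delta_{k-1}(G) \ge (b/a - o(1))n$ that contains no member of $\mathcal{H}(a, b+1)$.

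The construction I would use is a colour-sum hypergraph. Partition $V(G)$ into $a$ nearly-equal parts $V_0, \dots, V_{a-1}$; assign $\chi(v) := j$ for $v \in V_j$; for $Y \subseteq V(G)$ write $\chi(Y) := \sum_{u \in Y}\chi(u) \bmod a$; pick a suitable set $S \subseteq \mathbb{Z}/a\mathbb{Z}$ with $|S| = b$; and declare $\{v_1, \dots, v_k\}$ to be an edge iff $\chi(\{v_1, \dots, v_k\}) \in S$. A direct link count gives $|N_G(U)| = \sum_{c \in S - \chi(U)}|V_c| \ge b \lfloor n/a\rfloor$ for every $(k-1)$-subset $U$, so $\delta_{k-1}(G) \ge (b/a - o(1))n$.

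To argue freeness, suppose for contradiction that $G$ contains $H(P_1, \dots, P_r)$ for some $H \in \mathcal{H}$ and $(P_1, \dots, P_r) \in \mathcal{P}(H)$; let $W$ be the image of the embedded $H$ and $v_1, \dots, v_r$ the extension vertices. A vertex $v \notin W$ is adjacent to a $(k-1)$-subset $Y \subseteq W$ iff $\chi(Y) \in S - \chi(v)$; hence if some $T_i$ (an $a$-subset of $\binom{W}{k-1}$) has members with $a$ pairwise distinct values of $\chi(\cdot)$, then for every $(b+1)$-subset $P \subseteq T_i$ the colour sums of $P$ are $b+1$ distinct residues and cannot all fit in the $b$-element translate $S - \chi(v)$, contradicting the existence of $v_i$. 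Since $\mathcal{T}(H)$ enumerates \emph{all} $a$-subsets of $\binom{W}{k-1}$, it suffices to show that $\Sigma(W) := \{\chi(Y) : Y \in \binom{W}{k-1}\} = \mathbb{Z}/a\mathbb{Z}$ for every valid embedding.

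The main obstacle is the degenerate case $W \subseteq V_c$ of a monochromatic embedding, where $\Sigma(W)$ collapses to the singleton $\{(k-1)c\}$ and the argument fails outright. I would neutralise it by choosing $S$ disjoint from $\{kc \bmod a : c \in \mathbb{Z}/a\mathbb{Z}\} = \gcd(k,a)\cdot \mathbb{Z}/a\mathbb{Z}$, so that no monochromatic $k$-set is an edge and $G[V_c]$ is empty; then $\delta_{k-1}(G[W]) = 0 < (\alpha - \delta)m$ for $W \subseteq V_c$, which rules out $W$ realising any $H \in \mathcal{H}$. This choice is feasible when $b \le a - a/\gcd(k,a)$; when the reduced fraction $b/a$ violates this, the plan is to work with a refined modulus $a' = Ka$ (choosing $K = k/\gcd(k,a)$ so that $k \mid a'$), partition into $a'$ parts, and take $S' \subseteq \mathbb{Z}/a'\mathbb{Z}$ of size $Kb$ disjoint from $k\mathbb{Z}/a'\mathbb{Z}$, replacing ``distinct residues modulo $a$'' in the key observation by the analogous ``evenly spread residues modulo $a'$'' condition. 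Intermediate embeddings (meeting several but not all colour classes) are handled by a short sum-shifting argument --- fix $k-2$ vertices of $W$, vary the last one, and exploit $|W| = m \ge 4a^2(k-1)$ to realise every residue. Reconciling these ingredients uniformly for all rationals in $[0,1)$ is the main technical content, and is where most of the casework is concentrated.
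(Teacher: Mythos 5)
Your proposal replaces the paper's cyclic-interval construction $G_{a,b,n}$ with a colour-sum hypergraph, which is a genuinely different route, but the outline has two gaps that I do not see how to close. The first is a hard ceiling at large $\alpha$. You must keep $S$ disjoint from $\{kc \bmod a' : c\}$, which is the subgroup $\langle k\rangle \le \mathbb{Z}/a'\mathbb{Z}$ of size $a'/\gcd(k,a') \ge a'/k$; otherwise some $G[V_c]$ would be complete, and a complete $K_m^{(k)}$ lies in $\mathcal{H}$, and (by your own observation) any $v$ with $\chi(v)\in S-(k-1)c$ then forms edges with all $a$ members of every $T_i$, so the extension to $\mathcal{H}(a,b+1)$ exists and freeness fails. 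Hence $|S'|/a' \le 1-1/k$ for every refinement $a'=Ka$, since $\gcd(k,Ka)\le k$ always, and this forces $\alpha \le 1-1/k$. The refinement you propose does not help; no choice of $K$ can. This is not a technical nuisance but a limitation intrinsic to the symmetric colour-sum structure, where every $(k-1)$-link has density essentially exactly $|S'|/a'$. The paper's cyclic-interval construction is asymmetric: $(k-1)$-sets inside one part see density $b/a$, while mixed $(k-1)$-sets see strictly more, and monochromatic $k$-sets are non-edges by design rather than by a divisibility accident, so $\alpha$ can be arbitrarily close to~$1$.

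The second gap is in the freeness step itself, even for $\alpha\le 1-1/k$. You reduce to showing $\Sigma(W)=\mathbb{Z}/a\mathbb{Z}$ for every embedded copy of some $H\in\mathcal{H}$, but the sum-shifting heuristic you cite is driven only by $|W|=m\ge 4a^2(k-1)$. That cannot force $W$ to meet every colour class --- $W$ could in principle occupy only two of the parts --- so the argument has to invoke the codegree hypothesis $\delta_{k-1}(G[W])\ge(\alpha-\delta)m$. Translating lower bounds on $|N(Y)\cap W|=\sum_{c\in S-\chi(Y)}x_c$ (as $\chi(Y)$ ranges over $\Sigma(W)$) into a lower bound on each $x_c=|W\cap V_c|$ is exactly the content of the paper's B\'ezout step: from $by=az+1$ one sums the bound $\sum_{j=0}^{b-1}x_{i+j}\ge(\alpha-2\delta)m$ over $y$ cyclic shifts to get $x_i\ge (1/a-2\delta y)m\ge k-1$. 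Nothing analogous is supplied for an arbitrary $S'$ disjoint from $\langle k\rangle$, and it is far from clear it holds for all such $S'$; at minimum $S'$ would have to be chosen with care, and this is left unaddressed. As it stands, the proposal identifies the right target ($\Sigma(W)=\mathbb{Z}/a\mathbb{Z}$ for a suitable $T_i$) but supplies neither the construction range nor the key structural lemma that the paper's B\'ezout argument delivers.
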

\begin{poc}
Given an integer $n$, let $G_{a,b,n}$ be the $k$-graph on the vertex set $[n]$ defined as follows:
\begin{itemize}

    \item Partition $[n]$ into $a$ parts $V_1, V_2, \dots, V_a$ as evenly as possible, that is,
    \[
        \bigl|\,|V_i| - |V_j|\,\bigr| \le 1 \quad \text{for all } i,j \in [a].
    \]

    \item For each $i \in [a]$, add every edge consisting of $k-1$ vertices from $V_i$ and 
    one vertex from $V_j$ whenever
    \[
        j \in \{\, i+1, i+2, \dots, i+b \,\} \pmod{a}.
    \]

    \item Add all edges that intersect at least three of the parts 
    $V_1, V_2, \dots, V_a$.

    \item Add all edges that lie inside some two of the parts $V_1, V_2, \dots, V_a$ and intersect each of these two parts in at least two vertices.
\end{itemize}

It is easy to see that the neighbourhood of any $(k-1)$-set $S$ consists of at least $b$ whole parts, minus the set $S$.
\hide{The neighbourhood of a $(k-1)$-set $S$ behaves as follows.
\begin{itemize}
    \item If $S$ lies entirely within a single part, then its neighbourhood consists of $b$ parts, each of size at least $n/a - 1$.
    \item If $S$ is contained in at least three of the parts 
    $V_1, V_2, \dots, V_a$, then its neighbourhood is the entire set $[n] \setminus S$.
    \item If $S$ is contained in exactly two of the parts $V_1, V_2, \dots, V_a$, then its neighbourhood consists of vertices from at least $a-2 \ge b$ parts, each of size at least $n/a - 1$, when $b \neq a-1$,
and is the entire set $[n]\setminus S$ when $b = a-1$.
\end{itemize}
}%
 Since each part has size at least $n/a-1$, we have
\[
\delta_{k-1}(G_{a,b,n}) \ge \frac{b}{a}\, n - b-|S|.
\]

To show that $\gamma(\mathcal{H}(a,b+1)) \ge \frac{b}{a}$, it suffices to prove that 
$G_{a,b,n}$ is $\mathcal{H}(a,b+1)$-free. 
Suppose, to the contrary, that $G_{a,b,n}$ contains some $k$-graph from 
$\mathcal{H}(a,b+1)$. By the definition of $\mathcal{H}(a,b+1)$, there exists a $k$-graph 
$F \in \mathcal{H}$ such that $G_{a,b,n}$ contains some $k$-graph in $F(a,b+1)$.

With the notation introduced in Definition~2.1, let $\mathcal{A}, \mathcal{T}$, and 
$\mathcal{P}$ be defined with respect to $F$, and set $r \coloneqq |\mathcal{T}|$. 
Then there exists some $P = (P_1,\ldots,P_r) \in \mathcal{P}$ such that 
$G_{a,b,n}$ contains $F(P)$ as a subgraph. In particular, $G_{a,b,n}$ 
contains $F$ as a subgraph.

By the definition of $\mathcal{H}$, we have 
$\delta_{k-1}(F) \ge (\alpha - \delta)m$.
Let  $x_i = |V_i\cap V(F)|$ for $i\in[a]$. 
For convenience, we do index arithmetic modulo $a$, that is, for any $i,j \in \mathbb{Z}$ with  $j \equiv i \pmod a$ we have $x_i = x_j$ and $V_i = V_j$. 

First, let us show  that 
\begin{equation}\label{eq:BSum}
\sum_{j=0}^{b-1}x_{i+j} \ge (\alpha- 2\delta)m,\quad
\mbox{for all $i \in \mathbb{Z}$}.
\end{equation}
Suppose to the contrary that there exists some $i \in \mathbb{Z}$ such that \[\sum_{j=0}^{b-1}x_{i+j} < (\alpha- 2\delta)m.\]
Let $t$ be the minimal integer in $[a]$ such that $x_{i-t} \ge k-1$. If such $t$ exists, then there exists a $(k-1)$-set $S$ of vertices in $V(F) \cap V_{i-t}$. It follows from $\delta_{k-1}(F)\ge (\alpha-\delta)m$ and $N_{F}(S)\subseteq N_{G_{a,b,n}}(S) \subseteq \bigcup_{j=1}^b V_{i-t+j}$ that
\[
(\alpha-\delta)m \le d(S) \le \sum^{b}_{j=1}x_{i-t+j} \le \sum^{t-1}_{j=1}x_{i-j} + \sum_{j=0}^{b-1}x_{i+j} < (k-1)a +(\alpha- 2\delta)m \le (\alpha- \delta)m,
\]
a contradiction. If no such $t$ exists then \[m=|V(F)| = \sum^a_{j=1}x_{i-j}< (k-1)a,\] again a contradiction. This proves~\eqref{eq:BSum}.

Next, we will show that $x_i \ge k-1$ for each $i\in [a]$. By B\'ezout's identity, there exist two positive integers $y,z \in [a]$ such that $by = az+1$. Then, for each $i\in [a]$, we have by~\eqref{eq:BSum} that
\[
y(\alpha-2\delta)m \le \sum^{by-1}_{j=0} x_{i+j} = x_i+ \sum^{az}_{j=1}x_{i+j} = x_i +z\cdot\sum^{a}_{j=1}x_j = x_i+zm.  
\]
This implies that 
\[x_i \ge y(\alpha-2\delta)m -zm= \left(\frac{1}{a}-2\delta  y \right)m \ge \frac{1}{2a}m > k-1.\]

Recall that $G_{a,b,n}$ contains $F(P_1,\dots,P_r)$ as a subgraph for some $(P_1,\ldots,P_r) \in \mathcal{P}$.
Since $F$ intersects each part $V_i$ in at least
$k - 1$ vertices, there exist $a$ distinct vertex sets $S_1,S_2,\dots,S_a$, each of size $k-1$, such that $S_i \subseteq V_i\cap V(F)$ for every $i\in[a]$.
Let $j\in [r]$ be the (unique) index such that $T_j = \{S_1,S_2,\dots,S_a\}$. 
Let $v_j$ be the vertex in $F(P_1, \dots, P_r)$ that forms edges with every element of $P_j$.
Since at most $b$ parts among $V_1,\dots,V_a$ contain a $(k-1)$-set that forms an edge with $v_j$, 
this yields a contradiction to $|P_j|=b+1$.
\end{poc}

\begin{claim}\label{cl: lower}
    $\gamma(\mathcal{H}(a,b+1))  \le \alpha$.
\end{claim}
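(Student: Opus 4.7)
The plan is to establish the equivalent statement: for every $\eta>0$ and all sufficiently large $n$, every $n$-vertex $k$-graph $G$ with $\delta_{k-1}(G)\ge(\alpha+\eta)n$ contains a member of $\mathcal{H}(a,b+1)$ as a subgraph. Since $\eta>0$ is arbitrary, this yields $\gamma(\mathcal{H}(a,b+1))\le\alpha$.

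The first step is to extract a seed copy of some $F\in\mathcal{H}$ inside $G$ via Lemma~\ref{lem:sub-large-co}. Applying the lemma with $(\alpha-\delta,\delta)$ in place of its $(\alpha,\delta)$, the hypothesis $\delta_{k-1}(G)\ge\alpha n$ holds for $n$ large, and since $m\ge M(\delta)$ by construction, the conclusion yields a subgraph $F\subseteq G$ on exactly $m$ vertices with $\delta_{k-1}(F)>(\alpha-\delta)m$, so $F\in\mathcal{H}$ by definition.

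The second step is to enlarge $F$ to a copy of some member of $F(a,b+1)\subseteq\mathcal{H}(a,b+1)$ sitting inside $G$. Enumerate $\mathcal{T}(F)=\{T_1,\dots,T_r\}$ with $r=\binom{\binom{m}{k-1}}{a}$ a constant, and for each $i\in[r]$ let $B_i$ be the set of $v\in V(G)\setminus V(F)$ whose $G$-link contains at least $b+1$ of the $(k-1)$-sets in $T_i$. A double-counting argument combining the codegree lower bound
\[
\sum_{S\in T_i}|N_G(S)\setminus V(F)|\ \ge\ a(\alpha+\eta)n - am
\]
with the trivial upper bound
\[
\sum_{v\notin V(F)}\bigl|\{S\in T_i:v\in N_G(S)\}\bigr|\ \le\ b\bigl(n-m-|B_i|\bigr)+a|B_i|
\]
yields $|B_i|\ge (a\eta n-(a-b)m)/(a-b)$, which exceeds $r$ once $n$ is large.

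Finally, since each $|B_i|\ge r$, one can greedily pick distinct vertices $v_1,\dots,v_r$ with $v_i\in B_i$. For each $i$, choosing any $(b+1)$-subset $P_i\subseteq T_i$ consisting of $(k-1)$-sets that $v_i$ extends to edges of $G$ produces a tuple $(P_1,\dots,P_r)\in\mathcal{P}(F)$, and by construction the subgraph $F(P_1,\dots,P_r)\in F(a,b+1)$ embeds in $G$, providing the desired member of $\mathcal{H}(a,b+1)$. The only nontrivial step is the double-counting estimate for $|B_i|$, which is routine; no serious obstacle is anticipated.
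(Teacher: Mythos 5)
Your proposal is correct and follows essentially the same approach as the paper: first invoke Lemma~\ref{lem:sub-large-co} to find a copy of some $F\in\mathcal{H}$ inside $G$, and then use a double-counting argument to show that for each $T_i\in\mathcal{T}(F)$ there are many vertices outside $V(F)$ extending at least $b+1$ of the $(k-1)$-sets in $T_i$, which allows a greedy choice of the $r$ extension vertices. The only cosmetic difference is bookkeeping: you restrict the sets $B_i$ to $V(G)\setminus V(F)$ from the start and need $|B_i|\ge r$, whereas the paper counts all of $V(G)$ and needs $|R_T|\ge N=|V(F)|+r$.
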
   
\begin{poc}
Let $N$ be the number of vertices in each $k$-graph belonging to the family $\mathcal{H}(a,b+1)$ (which is the same for all $k$-graphs in the family by definition). 
For any $\varepsilon >0$, let $n_0$ be the constant such that 
\[\varepsilon n_0 = \max\{ N, M(\varepsilon) \}.\]
Let $G$ be a $k$-graph on $n \ge n_0$ vertices with $\delta_{k-1}(G) \ge (\alpha+\varepsilon)n$.
To prove that $\gamma(\mathcal{H}(a,b+1)) \le \alpha$, it suffices to show that
$G$ 
contains a subgraph from $\mathcal{H}(a,b+1)$.
By Lemma~\ref{lem:sub-large-co}, $G$  contains $F$ as a subgraph for some $F\in \mathcal{H}$. 
With the notation introduced in Definition~2.1, let $\mathcal{A}, \mathcal{T}$, and 
$\mathcal{P}$ be defined with respect to $F$.
For any element $T = \{S_1,S_2,\dots,S_a\}$ of $\mathcal{T}$,
let $R_T$ be the collection of vertices $v$ in $G$ such that $v$ forms an edge with at least $b+1$ elements in $T$, that is,
 \[
 R_T\coloneqq\big\{v\in V(G): |\{i\in [a]: \{v\}\cup S_i\in G\}|\ge b+1\big\}.
 \] 
 
 We first show that $|R_T|\ge N $ for each $T \in \mathcal{T}$. 
We double count the number of pairs $(S,v)$, where $S \in T$ and 
$v \in V(G)\setminus R_T$, such that $S \cup \{v\} \in E(G)$.
Since each vertex in $V(G)\setminus R_T$ forms an edge with at most $b$ 
elements of $T$, the total number of such pairs is at most $bn$.
On the other hand, from the assumption that
\[
\delta_{k-1}(G) \ge \frac{b}{a}\,n + \varepsilon n
\]
and the fact that $\varepsilon n \ge N$, we obtain that the number of such pairs 
is at least
\[
\biggl(\frac{b}{a}\,n + (N - |R_T|)\biggr)a.
\]
Therefore,
\[
\biggl(\frac{b}{a}\,n + (N - |R_T|)\biggr)a \le bn,
\]
which implies that $|R_T| \ge N$.

Let $r\coloneqq|\mathcal{T}|$ and write
\[
\mathcal{T} = \{T_1, T_2, \dots, T_r\}.
\]
Since $N = |V(F)|+ |\mathcal{T}|$, we can greedily choose distinct vertices
$v_1, v_2, \dots, v_r$ from $V(G) \setminus V(F)$ such that $v_i \in R_{T_i}$ for each $i \in [r]$.
This implies that for each $v_i$ there exists a subset
$P_i \subseteq T_i$ with $|P_i| = b+1$ such that $v_i$ forms an edge with
every element of $P_i$.
Consequently, $G$ contains a copy of
\[
F(P_1, P_2, \dots, P_r) \in F(a, b+1)\subseteq\mathcal{H}(a, b+1),
\]
which completes the proof of the claim.
\end{poc}
Combining Claim~\ref{cl: upper} and Claim~\ref{cl: lower}, we complete the proof of Theorem~\ref{thm:rational}.
\end{proof}

\section{Sequence of $k$-graphs with codegree Tur\'an density $\frac{r-1}{r}$.
}\label{sec:single k-graph}

We say that a $k$-graph $F$ is \emph{$r$-colourable}
if $V(F)$ can be partitioned into $r$ sets $A_1,A_2,\dots,A_r$ such that no edge of $F$ lies entirely inside some part $A_i$. We will use the following observation of Keevash and Zhao~\cite{KZ07}

\begin{proposition}\label{ob:KZ} If a $k$-graph $F$ is not $r$-colourable, then $\gamma(F)\ge \frac{r-1}{r}$.
\end{proposition}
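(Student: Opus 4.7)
The plan is to exhibit, for each $n$, an $F$-free $k$-graph $H_n$ on $n$ vertices with $\delta_{k-1}(H_n) \ge \frac{r-1}{r}\,n - O(1)$, which immediately gives $\gamma(F)\ge\frac{r-1}{r}$ via the definition of codegree Tur\'an density.

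The natural candidate is the \emph{complete $r$-partite} $k$-graph: partition $[n]$ into $r$ parts $V_1,\dots,V_r$ of sizes differing by at most~$1$, and let $H_n$ consist of all $k$-subsets of $[n]$ that are \emph{not} contained in a single part. I would first check that $H_n$ is $F$-free. Indeed, any homomorphic (in particular, subgraph) embedding of $F$ into $H_n$ would induce an $r$-colouring of $V(F)$ via the parts $V_1,\dots,V_r$: since no edge of $H_n$ is monochromatic, no edge of $F$ would be monochromatic under this colouring, contradicting the assumption that $F$ is not $r$-colourable.

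Next I would compute $\delta_{k-1}(H_n)$. For any $(k-1)$-subset $S \subseteq [n]$, a vertex $v \notin S$ belongs to $N_{H_n}(S)$ precisely when $S \cup \{v\}$ is not contained in a single part. If $S$ meets at least two parts, then $S\cup\{v\}$ meets at least two parts for every $v$, so $N_{H_n}(S)=[n]\setminus S$. If $S\subseteq V_i$ for some $i$, then $N_{H_n}(S) = [n]\setminus V_i$, whose size is $n-|V_i| \ge n - \lceil n/r\rceil \ge \frac{r-1}{r}\,n - 1$. Hence in all cases $\delta_{k-1}(H_n) \ge \frac{r-1}{r}\,n - 1$.

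Combining these two observations, $\coex(n,F) \ge \frac{r-1}{r}\,n - 1$ for all sufficiently large $n$, and dividing by $n$ and letting $n\to\infty$ yields $\gamma(F) \ge \frac{r-1}{r}$. There is no real obstacle here; the only thing to keep an eye on is the trivial condition that each part has at least $k-1$ vertices so that $(k-1)$-subsets inside a single part exist (which holds for $n\ge r(k-1)$, and the case of small $n$ is irrelevant for the limiting density).
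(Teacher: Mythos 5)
Your proposal is correct and uses the exact same construction as the paper: the complete $r$-partite $k$-graph on $n$ vertices with balanced parts, which is $F$-free since $F$ is not $r$-colourable and has minimum codegree at least $n-\lceil n/r\rceil$. Your write-up simply spells out the codegree computation and the $F$-freeness check in slightly more detail than the paper does.
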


\begin{proof} Let $n\to\infty$ and let $H$ be an $n$-vertex $k$-graph with a partition
\[
V(H)=A_1\cup A_2\cup \dots \cup A_r,
\]
where $|A_i|\in\{\lfloor n/r\rfloor,\lceil n/r\rceil\}$ for every $i\in[r]$ 
and the edge set $E(H)$ consists of all $k$-sets that intersect at least two of these parts.
Clearly, $H$ is $F$-free and satisfies $\delta_{k-1}(H) \ge n-\lceil n/r\rceil$,
which implies that $\gamma(F)\ge \frac{r-1}{r}$.\end{proof}

Next, inductively on $r=1,2,\dots$, we define a $k$-graph $F^k_r$ as follows. The $k$-graph $F^k_1$ is just a single edge. Let $r\ge 2$.
Let $S$ be a vertex set of size $r(k-2)+1$. Let $m\coloneqq \binom{r(k-2)+1}{k-1}$, and enumerate all $(k-1)$-subsets of $S$ as $X_1,X_2,\dots ,X_m$. For each $j\in [m]$, let $\tilde{X}_j$ be a copy of $F^{k}_{r-1}$, with these being vertex disjoint from each other and from the set $S$.
Set 
\[V(F^k_r) \coloneqq S\cup \bigcup^m_{j=1}V\left(\tilde{X}_j\right) \text{ and } E(F^k_r) \coloneqq \Big\{X_j \cup \{v\}: j\in[m],\ v\in V(\tilde{X}_j)\Big\}\cup \bigcup^m_{j=1}E\left(\tilde{X}_j\right) .\]

In order to illustrate the above definition (and since $F_2^k$ appears in Theorem~\ref{thm:non-principal}), let us describe the $k$-graph $F^k_2$ separately.
We start with a vertex set $S$ of size $2k-3$, and enumerate its $(k-1)$-subsets as $X_1,\dots ,X_m$. We add $m$ new edges $\tilde{X}_{1},\dots,\tilde{X}_m$ to $E(F_2^k)$, which are pairwise disjoint and also disjoint from $S$. Finally, for each $i\in [m]$, we add  to $E(F_2^k)$ all $k$-sets that contain $X_i$ and a vertex of $\tilde{X}_{i}$. 
\hide{Thus
\[
V(F^k_2)\coloneqq S\cup \bigcup_{i\in [m]}\tilde{X}_{i}\quad\text{and}\quad
E(F^k_2)\coloneqq \left\{X_i\cup \{v\}:i\in[m],v \in \tilde{X}_{i}\right\}\cup \left\{\tilde{X}_{i}:i\in [m]\right\}.
\]
}

We prove the following result about these $k$-graphs.

\begin{theorem}\label{thm:F_r}
    Fix any $k \ge 3$. Then, for every $r\ge1$, the $k$-graph $F^k_r$ is not $r$-colourable and satisfies $\gamma(F^k_r) = \frac{r-1}{r}$.
\end{theorem}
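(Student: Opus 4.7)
The plan is to establish both non-$r$-colourability and $\gamma(F^k_r)=(r-1)/r$ by joint induction on $r$, exploiting the recursive structure of $F^k_r$. The base case $r=1$ is immediate: $F^k_1$ is a single edge, hence not $1$-colourable, and $\gamma(F^k_1)=0$ since any $n$-vertex $k$-graph with $\delta_{k-1}\ge \varepsilon n$ contains an edge for $n$ large enough.

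For non-$r$-colourability in the inductive step, I would argue by contradiction. Suppose $V(F^k_r)$ admits an $r$-colouring. Since $|S|=r(k-2)+1$, pigeonhole guarantees a colour $c$ whose class in $S$ contains at least $\lceil (r(k-2)+1)/r\rceil=k-1$ vertices, yielding a monochromatic $(k-1)$-subset $X_j\subseteq S$ of colour $c$. For every $v\in V(\tilde X_j)$, the edge $X_j\cup\{v\}\in E(F^k_r)$ forbids $v$ from having colour $c$, so $V(\tilde X_j)$ is $(r-1)$-coloured, contradicting the inductive non-$(r-1)$-colourability of $F^k_{r-1}\cong \tilde X_j$. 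The lower bound $\gamma(F^k_r)\ge (r-1)/r$ then follows immediately from Proposition~\ref{ob:KZ}.

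For the upper bound $\gamma(F^k_r)\le (r-1)/r$, set $\alpha=(r-1)/r$, fix $\varepsilon>0$, and take an $n$-vertex $k$-graph $H$ with $\delta_{k-1}(H)\ge (\alpha+\varepsilon)n$ for $n$ sufficiently large in terms of $r,k,\varepsilon$. Set $s=r(k-2)+1$ and $m=\binom{s}{k-1}$. I would pick any set $S\subseteq V(H)$ of size $s$, enumerate its $(k-1)$-subsets as $X_1,\dots,X_m$, and then independently assign each vertex of $V(H)\setminus S$ to one of $m$ parts $W_1,\dots,W_m$ uniformly at random. The aim is to show that with positive probability, for every $j\in[m]$, the induced sub-$k$-graph $H[N_H(X_j)\cap W_j]$ satisfies
\[
\delta_{k-1}\bigl(H[N_H(X_j)\cap W_j]\bigr)\ge \Bigl(\tfrac{r-2}{r-1}+\varepsilon'\Bigr)\bigl|N_H(X_j)\cap W_j\bigr|
\]
for some $\varepsilon'=\Theta(\varepsilon)$. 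Once this holds, the inductive hypothesis $\gamma(F^k_{r-1})=(r-2)/(r-1)$ produces a copy of $F^k_{r-1}$ inside each of these $m$ pairwise disjoint sub-$k$-graphs (all of size $\Theta(n)$), playing the role of $\tilde X_1,\dots,\tilde X_m$; together with $S$ and the edges $X_j\cup\{v\}$ (present because $v\in N_H(X_j)$), this assembles the required copy of $F^k_r$ in $H$.

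The quantitative core of the argument is that for any $(k-1)$-set $Y$, inclusion--exclusion with $\delta_{k-1}(H)\ge(\alpha+\varepsilon)n$ gives $|N_H(Y)\cap N_H(X_j)|\ge (2\alpha+2\varepsilon-1)n-O(1)$, whereas $|N_H(X_j)|\le n$. Consequently the expected ratio $|N_H(Y)\cap N_H(X_j)\cap W_j|/|N_H(X_j)\cap W_j|$ concentrates around $(2\alpha-1)/\alpha=2-1/\alpha$, which equals $\tfrac{r-2}{r-1}+\Theta(\varepsilon)$ when $\alpha=(r-1)/r+\varepsilon$. The main obstacle will be to justify the required minimum codegree condition uniformly, via Chernoff concentration for the random partitioning and a union bound over the at most $m\binom{n}{k-1}$ pairs $(Y,j)$, while carefully tracking that the $\Theta(\varepsilon)$ margin dominates both the $O(1/n)$ corrections from excluding $S$ and the exponentially small tail probabilities.
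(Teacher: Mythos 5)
Your argument for non-$r$-colourability and the lower bound via Proposition~\ref{ob:KZ} match the paper's almost word for word, so the interesting comparison is in the upper bound. There you prove $\gamma(F^k_r)\le \frac{r-1}{r}$ by randomly partitioning $V(H)\setminus S$ into $m=\binom{r(k-2)+1}{k-1}$ parts $W_1,\dots,W_m$ and invoking Chernoff plus a union bound to show each $H[N_H(X_j)\cap W_j]$ inherits codegree density $\ge \frac{r-2}{r-1}+\Theta(\varepsilon)$. This works, and your ratio computation $(2\alpha-1)/\alpha = \frac{r-2}{r-1}$ is the right one; but the randomisation is doing more work than the problem requires. The paper observes instead that $|V(F^k_{r-1})|$ is a \emph{constant} (independent of $n$), so one can simply find $\tilde X_1,\dots,\tilde X_m$ greedily: having fixed $\tilde X_1,\dots,\tilde X_{i-1}$, set $H_i := H\bigl[\,N(X_i)\setminus\bigl(S\cup\bigcup_{j<i}\tilde X_j\bigr)\,\bigr]$; this removes only $O(1)$ vertices, so $|V(H_i)|\ge(\frac{r-1}{r}+\frac{\varepsilon}{2})n$ and $\delta_{k-1}(H_i)\ge\delta_{k-1}(H)-(n-|V(H_i)|)\ge(\frac{r-2}{r-1}+\varepsilon')|V(H_i)|$, so the induction hypothesis yields $\tilde X_i$ directly. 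No concentration is needed. The greedy route is worth internalising because the same pattern (delete a bounded prefix, the codegree survives) recurs throughout the paper, e.g.\ in Claim~\ref{cl:gammaF2} and in the proof of Theorem~\ref{thm:non-principal}. Your random-partition proof would also need a slightly more careful statement for small $r$ (the base case $r=2$ has $F^k_1$ a single edge, where the inductive bound $\gamma(F^k_1)=0$ must be used with the degenerate value $\frac{r-2}{r-1}=0$), whereas the greedy version handles this uniformly.
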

\begin{proof}
We prove the theorem by induction on~$r$.
Although the case $r=1$ is trivial and the induction step handles the case $r=2$ correctly, we present a separate proof for $r=2$ (since $F_2^k$ plays a crucial role in Theorem~\ref{thm:non-principal}).
\begin{claim}\label{cl:gammaF2}
    The $k$-graph $F^k_2$ is not 2-colourable and $\gamma(F^k_2)=1/2.$
\end{claim}
\begin{poc}
    Let us first show that $F^k_2$ is not $2$-colourable. Suppose, for contradiction, that $F^k_2$ is $2$-colourable, and let $V(F^k_2)=A\cup B$ be the partition such that neither $A$ nor $B$ contains an edge. Recall that $V(F^k_2)= S\cup \bigcup_{i\in [m]}V(\tilde{X}_{i})$, where $|S|=2k-3$. By the pigeonhole principle, there exists a set $S'\subseteq S$ of size $k-1$ entirely lying in $A$ or $B$. Without loss of generality, assume that $S'\subseteq A$. Let $i\in [m]$ be such that $S'=X_i$. Consider the corresponding set $\tilde{X}_{i}$. Since $\tilde{X}_{i}$ is actually an edge of $F^k_2$, it must have at least one vertex $v\in \tilde{X}_{i}$ lying in $A$; otherwise, $\tilde{X}_{i}\subseteq B$ contradicting the assumption that $B$ is edge-free. But then $X_i\cup \{v\}$ is an edge lying in $A$, a contradiction. Hence,  $F^k_2$ is not $2$-colourable.

By Proposition~\ref{ob:KZ}, we have that $\gamma(F^k_2)\ge 1/2$. Let us show the upper bound $\gamma(F^k_2)\le 1/2$. Let $\varepsilon>0$ be an arbitrarily small constant, and assume that $n$ is sufficiently large. We claim that any $k$-graph $H$ with minimum codegree $\delta_{k-1}(H)\ge (1/2+\varepsilon)n$ must contain $F^k_2$ as a subgraph.
        
Identify  an arbitrary $(2k-3)$-subset of $V(H)$ with $S$. Its $(k-1)$-subsets are labelled by $X_1,X_2,\dots,X_m$, where $m\coloneqq \binom{2k-3}{k-1}$. We find suitable sets  $\tilde{X}_1,\dots,\tilde{X}_m\subseteq V(H)$ one by one. Suppose that $i\ge 1$ and we have already defined $\tilde{X}_1,\dots,\tilde{X}_{i-1}$. Let 
\[
A:=N(X_i)\setminus\Big(S\cup \bigcup_{j=1}^{k-1} \tilde{X}_j\Big).
\]By the codegree condition of $H$, we have that
    $$
    |A|\ge d(X_i)-|S|-(i-1)k\ge \left(\frac{1}{2}+\varepsilon\right)n-(2k-3)-(i-1)k\ge 
     k-1.
    $$
    Take an arbitrary $(k-1)$-subset $Y$ of $A$. Since 
    \[
    d(Y)\ge \left(\frac12+\varepsilon\right)n> |V(H)|-|A|,
    \] 
    there must exist a vertex $y\in N(Y)\cap A$. We take $Y\cup \{y\}\in E(H)$ for $\tilde{X}_i$.  After $m$ steps, the sets $S, \tilde{X}_{1},\dots,\tilde{X}_{m}$ form a copy of $F^k_2$ in $H$, finishing the proof of the claim.  
\end{poc}

In the induction step for $r\ge 3$, we have to show that  $F^k_{r}$ is not $r$-colourable and $\gamma(F^k_{r}) =\frac{r-1}{r}$. The proof is very similar to that of Claim~\ref{cl:gammaF2}. 
First, we show that $F^k_r$ is not $r$-colourable.  Suppose, for contradiction, that $F^k_r$ is $r$-colourable. Then $V(F^k_r)$ can be partitioned into $r$ parts $A_1,A_2,\dots,A_r$, each of which induces no edge. Recall that \[V(F^k_r) \coloneqq S\cup \bigcup^m_{j=1}V\left(\tilde{X}_j\right) \text{ and } |S| = (k-2)r+1.\]
By the pigeonhole principle, there exists a set $S' \subseteq S$ of size $k-1$ entirely lying in some part~$A_i$.  Without loss of generality, assume that $S'\subseteq A_r$. Let $S'=X_j$. Consider the corresponding copy of $F^k_{r-1}$, namely $\tilde{X}_j$.
As every vertex of $\tilde{X}_j$ forms an edge together with $X_j$, we must have that
\[
\tilde{X}_j \cap A_r = \varnothing,
\]
which contradicts the fact that $\tilde{X}_j$ is not $(r-1)$-colourable.
Therefore, $F^k_r$ is not $r$-colourable.

Next, we show that $\gamma(F^k_r)=\frac{r-1}{r}$. By Observation~\ref{ob:KZ}, it remains to prove that $\gamma(F^k_r)\le \frac{r-1}{r}$. 
Given any (arbitrarily small) constant $\varepsilon>0$, let $n$ be sufficiently large. Let us show that any $k$-graph $H$ with minimum codegree $\delta_{k-1}(H)\ge (\frac{r-1}{r}+\varepsilon)n$ must contain $F^k_r$ as a subgraph.

Take an arbitrary vertex set $S \subseteq V(H)$ of size $r(k-2)+1$, and label its $(k-1)$-subsets by $X_1,X_2,\dots,X_m$. Our goal is to iteratively find, for each $X_i$, a copy of $F^k_{r-1}$ in $N(X_{i})$, denoted by $\tilde{X_i}$, with the additional requirement that $\tilde{X_1},\dots,\tilde{X_m}$ and $S$ are pairwise disjoint. Suppose that we have obtained $\tilde{X}_1,\dots,\tilde{X}_{i-1}$ for some $i\in[m]$.
We now have to find $\tilde{X}_{i}$.
Define
\[
H_i \coloneqq  H\Big[\, N(X_{i}) \setminus \Big(S \cup \bigcup_{j\in[i-1]} \tilde{X}_j\Big) \,\Big].
\]
Then we have
\[
|V(H_i)|
\ge \left(\frac{r-1}{r}+\varepsilon\right)n  - (r(k-2)+1)
- (i-1)\,|V(F_{r-1}^k)|
\ge \left(\frac{r-1}{r}+\frac{\varepsilon}{2}\right)n,
\]
where the last inequality holds since $n$ is  sufficiently large.
Consequently,
\[
\delta_{k-1}(H_i)
\ge \delta_{k-1}(H) - \bigl(n-|V(H_i)|\bigr)
\ge \left(\frac{r-2}{r-1}+\varepsilon'\right)|V(H_i)|,
\]
for some constant $\varepsilon'>0$ (depending on $\varepsilon$ and $r$ only).
By the induction hypothesis, $\gamma(F^k_{r-1})=\frac{r-2}{r-1}$.
Hence, $H_i$ contains a copy of $F^k_{r-1}$, which we denote by $\tilde{X}_i$.
After we have processed all $i\in[m]$, the sets $S, \tilde{X}_1,\dots,\tilde{X}_m$ form a copy of $F^k_r$ in $H$. This finishes the proof of the theorem.
\end{proof}

 \section{Proof of Theorem~\ref{thm:non-principal}}\label{sec:non-principal}
In this section, let us prove Theorem~\ref{thm:non-principal}. We first present two propositions by Mubayi and Zhao~\cite{MZ07}, which will be needed in our proof. 

Let $K^k(t,t)$ denote the $k$-graph with vertex set $V=A\cup B$ such that $A\cap B=\varnothing$ and $|A|=|B|=t$, where the edge set is defined as $\{S\in \binom{V}{k} \colon |S\cap A|=1\text{ or }|S\cap B|=1\}$. 

\begin{proposition}[\cite{MZ07}*{Proposition~4.2}]
\label{prop:MZ1}
    For $k\ge 3$, there exists a positive integer $\ell=\ell(k)$ such that $\gamma(K^k(\ell,\ell))\ge 1/2$.
\end{proposition}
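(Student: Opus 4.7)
My plan is to establish $\gamma(K^k(\ell,\ell))\ge\tfrac12$ by constructing, for each sufficiently large $n$, a $K^k(\ell,\ell)$-free $k$-graph $H_n$ on $n$ vertices with $\delta_{k-1}(H_n)\ge(\tfrac12-o(1))n$, where $\ell=\ell(k)$ depends only on $k$.

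The starting point is the following structural observation: if a copy of $K^k(\ell,\ell)$ with bipartition $A\cup B$ embeds into $H_n$, then for each $a\in A$ the link $L_a$ of $a$, viewed as a $(k-1)$-graph on $V(H_n)\setminus\{a\}$, contains the complete $(k-1)$-graph $K_\ell^{(k-1)}$ on the vertex set $B$. Indeed, every $(k-1)$-subset of $B$ together with $a$ is an edge of $K^k(\ell,\ell)$ (these are exactly the edges $e$ with $|e\cap A|=1$ containing $a$). Consequently, a sufficient condition for $H_n$ to be $K^k(\ell,\ell)$-free is that every vertex link of $H_n$ is $K_\ell^{(k-1)}$-free.

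With this reduction in hand, I would fix $\ell=\ell(k)$ large enough so that the Tur\'an density $\pi(K_\ell^{(k-1)})$ strictly exceeds $\tfrac12$; such $\ell$ exists because $\pi(K_\ell^{(k-1)})\to 1$ as $\ell\to\infty$ (this follows from the standard lower bound via a balanced partition of $V$ into $\ell-1$ parts together with a rainbow-style $(k-1)$-graph on that partition). It then suffices to construct an $n$-vertex $k$-graph $H_n$ in which every vertex link is a $K_\ell^{(k-1)}$-free $(k-1)$-graph of density at least $\tfrac12+\varepsilon$. A natural attempt is a highly symmetric (e.g.\ vertex-transitive) construction in which all vertex links are isomorphic copies of a fixed near-extremal $K_\ell^{(k-1)}$-free $(k-1)$-graph. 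The codegree of a $(k-1)$-set $S$ in $H_n$ equals the number of vertices $v$ for which $S\in L_v$, which by symmetry and averaging should be $\bigl(\tfrac12+\varepsilon-o(1)\bigr)n$, yielding the required bound on $\delta_{k-1}(H_n)$.

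The main obstacle is the coherence of the link structure: the $n$ links of a single $k$-graph are highly correlated, so they cannot be prescribed freely. Two routes I would pursue to overcome this are: (i) a combinatorial construction based on a vertex partition or a Cayley-type hypergraph over a finite group, exploiting automorphisms to force all links to be isomorphic and to carry the desired extremal $(k-1)$-graph structure; (ii) a probabilistic construction starting from a random $k$-graph of edge density slightly above $\tfrac12$, combined with an alteration step that removes one edge from each copy of $K^k(\ell,\ell)$, with a moment calculation showing that this affects each codegree by only $o(n)$. In either approach, the delicate point is guaranteeing simultaneously that $\delta_{k-1}(H_n)\ge(\tfrac12-o(1))n$ and that every vertex link remains $K_\ell^{(k-1)}$-free; since $\pi(K_\ell^{(k-1)})>\tfrac12$, this balance is not ruled out on density grounds, but achieving it with a concrete construction is the hardest step.
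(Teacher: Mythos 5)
Your key structural observation is correct: if a copy of $K^k(\ell,\ell)$ with parts $A\cup B$ embeds in $H$, then for each $a\in A$ the link $L_a$ contains a copy of $K_\ell^{(k-1)}$ on $B$, so forcing every vertex link to be $K_\ell^{(k-1)}$-free guarantees $K^k(\ell,\ell)$-freeness; and since $\pi(K_\ell^{(k-1)})\to 1$ as $\ell\to\infty$, one can pick $\ell$ with $\pi(K_\ell^{(k-1)})>1/2$. However, your argument stops precisely where the proposition actually begins: no construction is produced, and the two routes you sketch do not hold up. In route (i), the step ``by symmetry and averaging should be $(1/2+\varepsilon-o(1))n$'' is not valid. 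Vertex-transitivity makes all vertex links isomorphic and thus equalizes vertex degrees, but it gives no control over minimum codegree, which is a transversal quantity across links: the codegree of a $(k-1)$-set $S$ is the number of vertices whose link contains $S$, and this can collapse to zero even in a vertex-transitive hypergraph (e.g.\ two disjoint cliques of size $n/2$ give codegree $0$ to any $(k-1)$-set meeting both parts). In route (ii), a random $k$-graph $G^{(k)}(n,1/2+\varepsilon)$ contains $\Theta(n^{2\ell})$ copies of $K^k(\ell,\ell)$, and a fixed $(k-1)$-set $S$ lies inside an edge of $\Theta(n^{2\ell-k+1})\gg n$ of them; since $S$ is in only $O(n)$ edges altogether, a naive one-edge-per-copy deletion obliterates essentially every edge through $S$, and the promised ``moment calculation showing that this affects each codegree by only $o(n)$'' is both uncarried-out and, on its face, false. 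A more delicate deletion scheme would have to be designed and verified, and none is given.

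In short, the reduction to link graphs is a sensible first move (and may even coincide with part of the strategy in Mubayi and Zhao, whose proof the present paper cites rather than reproduces), but the crux of the proposition --- exhibiting a $K^k(\ell,\ell)$-free $k$-graph with minimum codegree $(1/2-o(1))n$ --- is absent, and the justifications offered for the two candidate constructions are incorrect or missing.
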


\begin{proposition}[\cite{MZ07}*{Proposition~4.3}]
\label{prop:MZ2}
    Let $k\ge 3$, $\ell\ge k-1$ and 
    \[
    \rho \coloneqq \frac{1}{2}\left(1-\frac{1}{\binom{\ell}{k-1}}+\frac{1}{\binom{\ell}{k-1}2^{1/\ell}}\right)<\frac{1}{2}.
    \]
    For any $\varepsilon>0$, there exists $N$ such that every $2$-colourable $k$-graph $H$ on $n$ vertices with $n>N$ and $\delta_{k-1}(H)\ge (\rho+\varepsilon)n$ contains a copy of $K^k(\ell,\ell)$.
\end{proposition}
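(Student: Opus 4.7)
The plan is to select a candidate pair $(A, B)$ for $K^k(\ell,\ell)$ uniformly at random from $\binom{X}{\ell} \times \binom{Y}{\ell}$, lower-bound its probability of success via a Jensen estimate in each coordinate, and verify a convexity inequality which relies on the precise form of $\rho$. Fix a $2$-colouring $V(H) = X \cup Y$ with no edge inside either part. Every $(k-1)$-subset $S$ of one part has $N(S)$ entirely in the other, so the codegree hypothesis forces $|X|, |Y| \ge (\rho+\varepsilon)n$. A pair $(A,B) \in \binom{X}{\ell} \times \binom{Y}{\ell}$ spans a $K^k(\ell,\ell)$ if and only if \textbf{(a)} $\{a\} \cup S \in E(H)$ for every $a \in A$ and $S \in \binom{B}{k-1}$, and \textbf{(b)} $\{b\} \cup T \in E(H)$ for every $b \in B$ and $T \in \binom{A}{k-1}$. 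Writing $W_B := \bigcap_{S \in \binom{B}{k-1}} N(S) \subseteq X$, condition (a) is exactly $A \subseteq W_B$; symmetrically for (b) with $W_A' := \bigcap_{T \in \binom{A}{k-1}} N(T) \subseteq Y$.

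Choose $(A, B)$ uniformly at random. A union bound inside the expectation, combined with the codegree estimate $\sum_{S \in \binom{Y}{k-1}}|X \setminus N(S)| \le \binom{|Y|}{k-1}(|X|-(\rho+\varepsilon)n)$, yields
\[
\mathbb{E}_B[|W_B|] \ge \mu := |X| - \binom{\ell}{k-1}\bigl(|X|-(\rho+\varepsilon)n\bigr),
\]
and symmetrically $\mathbb{E}_A[|W_A'|] \ge \mu' := |Y| - \binom{\ell}{k-1}(|Y|-(\rho+\varepsilon)n)$. The function $h(x) := \bigl(\max(0, x-\ell+1)/|X|\bigr)^\ell$ is convex and non-decreasing on $[0, \infty)$ and pointwise at most $\binom{x}{\ell}/\binom{|X|}{\ell}$, so Jensen gives $\Pr[(a)] \ge h(\mu) \ge \alpha^\ell - O(1/n)$ with $\alpha := \mu/|X|$, and symmetrically $\Pr[(b)] \ge \beta^\ell - O(1/n)$ with $\beta := \mu'/|Y|$. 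Inclusion--exclusion then yields $\Pr[(a) \cap (b)] \ge \alpha^\ell + \beta^\ell - 1 - O(1/n)$.

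It remains to prove $\alpha^\ell + \beta^\ell > 1$ with a gap independent of $n$. Parametrising by $s := |X|/n \in [\rho+\varepsilon,\,1-\rho-\varepsilon]$, we have $\alpha = \alpha(s) := 1 - \binom{\ell}{k-1}(1 - (\rho+\varepsilon)/s)$ and $\beta = \alpha(1-s)$. On this range $\alpha(s)$ is positive (which follows from $2^{-1/\ell} > 1/2$ together with the definition of $\rho$), strictly decreasing, and strictly convex in $s$, so $\alpha(s)^\ell$ is strictly convex, and hence so is $f(s) := \alpha(s)^\ell + \alpha(1-s)^\ell$. Since $f$ is symmetric about $s = 1/2$, its minimum is attained there; a direct calculation from the definition of $\rho$ gives $\alpha(1/2) = 2^{-1/\ell} + 2\binom{\ell}{k-1}\varepsilon$, so $f(1/2) = 2\bigl(2^{-1/\ell} + 2\binom{\ell}{k-1}\varepsilon\bigr)^\ell = 1 + \Omega(\varepsilon)$. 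Taking $N$ large enough to absorb the $O(1/n)$ slack gives $\Pr[(a) \cap (b)] > 0$ and thus the desired $K^k(\ell,\ell)$. The main obstacle is this convexity computation: $\rho$ is calibrated so that $f(1/2) = 1$ exactly when $\varepsilon = 0$, so no weaker codegree assumption would suffice for the probabilistic argument; the remaining ingredients (double counting, Jensen, inclusion--exclusion) are routine.
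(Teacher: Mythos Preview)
The paper does not prove this proposition; it is quoted verbatim from Mubayi and Zhao~\cite{MZ07} and used as a black box in Section~\ref{sec:non-principal}. So there is no proof here to compare against line by line.

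That said, your argument is correct and is, in outline, the standard averaging proof one would expect (and is presumably close to what \cite{MZ07} do). A few checks worth recording explicitly:
\begin{itemize}
    \item The pointwise bound $\binom{x}{\ell}/\binom{|X|}{\ell}\ge h(x)$ does hold for integer $0\le x\le |X|$: for $x\ge \ell$ one has $(x-i)|X|-(x-\ell+1)(|X|-i)=(\ell-1-i)|X|+i(x-\ell+1)\ge 0$ for every $0\le i\le \ell-1$, so each factor $\tfrac{x-i}{|X|-i}$ dominates $\tfrac{x-\ell+1}{|X|}$.
    \item Positivity of $\alpha(s)$ at the right endpoint $s=1-\rho-\varepsilon$: with $\varepsilon=0$ one computes $\alpha(1-\rho)=1-(1-2^{-1/\ell})/(1-\rho)$, which is positive because $\rho<\tfrac12<2^{-1/\ell}$; increasing $\varepsilon$ only helps.
    \item Convexity of $s\mapsto \alpha(s)^\ell$ and $s\mapsto \alpha(1-s)^\ell$ both follow from $\alpha>0$ and $\alpha''>0$, via $(\alpha^\ell)''=\ell(\ell-1)\alpha^{\ell-2}(\alpha')^2+\ell\alpha^{\ell-1}\alpha''\ge 0$; symmetry then pins the minimum at $s=\tfrac12$.
\end{itemize}
With these in hand your inclusion--exclusion step $\Pr[(a)\cap(b)]\ge \alpha^\ell+\beta^\ell-1-O(1/n)>0$ goes through for $n$ large, and the proof is complete.
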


Now we are ready to prove Theorem~\ref{thm:non-principal}.

\begin{proof}[Proof of Theorem~\ref{thm:non-principal}]
    Given an integer $k\ge 3$, let $\ell=\ell(k)$ be the positive integer returned by Proposition~\ref{prop:MZ1}. Define $F_1\coloneqq K^k(\ell,\ell)$ and $F_2 \coloneqq F^k_2$, where  $F^k_2$ is the $k$-graph defined in Section~\ref{sec:single k-graph}.
    
    Let us show that $F_1$ and $F_2$ satisfy Theorem~\ref{thm:non-principal}. By Proposition~\ref{prop:MZ1} and Theorem~\ref{thm:F_r}, we  know that $\gamma(F_1)\ge 1/2$ and $\gamma (F_2) =\frac{1}{2}$. Thus it is enough to show that $\gamma(\{F_1,F_2\})<1/2$.

Given $k$ and $\ell$, let $\rho =\rho(k,\ell)<1/2$ be the constant returned by Proposition~\ref{prop:MZ2}. Set 
$$\varepsilon\coloneqq \frac{1}{6}\left(\frac{1}{2}-\rho\right)>0,$$
        and take $n$ to be sufficiently large. It is enough to prove that any $n$-vertex $k$-graph $H$ with minimum codegree $\delta_{k-1}(H)\ge (1/2-\varepsilon)n$ must contain $F_1$ or $F_2$ as a subgraph, for this yields $\gamma(\{F_1,F_2\})\le 1/2-\varepsilon<1/2$ as desired.

Assume that $H$ is $F_2$-free. Let us show that, for any vertex subset $S$ of size $2k-3$, there must exist a $(k-1)$-subset $X\subseteq S$ such that one can find a set $A\subseteq N(X)$ with $|A|\ge (1/2-2\varepsilon)n$ containing no edge of $H$. Suppose on the contrary that no such $X$ exists. We enumerate $(k-1)$-subsets of $S$ as $X_1,\dots,X_m$, $m\coloneqq {2k-3\choose k-1}$, and follow a similar inductive procedure as in the proof of the upper bound in Claim~\ref{cl:gammaF2} to find edges $\tilde{X}_i$ for $i=1,2,\dots,m$. In fact, the argument becomes slightly simpler here, since at Step $i$, we may directly choose any edge in the set $N(X_{i})\setminus (S\cup \bigcup_{j\in [i-1]}\tilde{X}_{j})$. The latter set has at least $\delta_{k-1}(H)-(2k-3)-(i-1)k\ge (1/2-2\varepsilon)n$ vertices and thus spans at least one edge by our assumption. This eventually yields a copy of $F_2$, a contradiction.

Fix an arbitrary vertex subset $S\subseteq H$ of size $2k-3$. Let $X\subseteq S$ and $A\subseteq N(X)$ be two sets satisfying the property above. Then $A$ is an independent set and $|A|\ge (1/2-2\varepsilon)n$. Now take any $(2k-3)$-set $S' \subseteq A$. Then there exists a $(k-1)$-set $X' \subseteq S'$ such that
        there is a vertex set $B \subseteq N(X')$ of size at least \(\left(1/2-2\varepsilon\right)n\) which contains no edge of $H$. Since $A$ is independent, we have $B\cap A\subseteq N(X')\cap A=\varnothing$. Let $H'=H[A\cup B]$ be the induced $k$-graph on $A\cup B$. Then $|V(H')|\ge |A|+|B|\ge (1-4\varepsilon)n$, and thus
        \[
        \delta_{k-1}(H')\ge \delta_{k-1}(H)-4\varepsilon n\ge \left(\frac{1}{2}-5\varepsilon\right)n\ge (\rho+\varepsilon)n\ge (\rho+\varepsilon)\cdot |V(H')|.
        \]
        Since both $A$ and $B$ are independent, $H'$ is $2$-colourable. By Proposition~\ref{prop:MZ2}, $H'$ contains a copy of $F_1$. This finishes the proof of the theorem.
\end{proof}

\bibliographystyle{abbrv}
\bibliography{codegree}
\end{document}